\documentclass[12pt, reqno]{amsart}
\usepackage{amsmath, amsthm, amscd, amsfonts, amssymb, graphicx, color}
\usepackage[bookmarksnumbered, colorlinks, plainpages]{hyperref}
\usepackage{amsrefs}
\hypersetup{colorlinks=true,linkcolor=red, anchorcolor=green,
citecolor=cyan, urlcolor=red, filecolor=magenta, pdftoolbar=true}

\textheight 22.5truecm \textwidth 14.5truecm
\setlength{\oddsidemargin}{0.35in}\setlength{\evensidemargin}{0.35in}

\setlength{\topmargin}{-.5cm}

\newtheorem{theorem}{Theorem}[section]
\newtheorem{lemma}[theorem]{Lemma}

\newtheorem{corollary}[theorem]{Corollary}
\theoremstyle{definition}

\newtheorem{example}[theorem]{Example}

\theoremstyle{remark}
\newtheorem{remark}[theorem]{Remark}
\numberwithin{equation}{section}


\def\rad{\operatorname{rad}}

\def\esup{\operatornamewithlimits{ess\,sup}}

\def\N{\mathbb N}
\def\R{\mathbb R}

\def\ap{\approx}

\def\mf{\mathfrak M}

\def\rn{\R^n}
\def\a{\alpha}

\def\la{\lambda}

\def\vp{\varphi}

\def\M{\mathcal M}

\def\dn{\downarrow}

\def\ls{\lesssim}
\def\gs{\gtrsim}

\def\R{\mathbb R}

\def\M{\mathcal M}

\def\Bxr {{B(x,r)}}

\def\Lploc{L_p^{\rm loc}(\rn)}

\def\Lloc{L_1^{\rm loc}(\rn)}


\begin{document}
\setcounter{page}{1}

\title[A note on boundedness of the maximal function on Morrey spaces]{A note on boundedness of the Hardy-Littlewood maximal operator on Morrey spaces}

\author[A. Gogatishvili and R.Ch. Mustafayev]{A. Gogatishvili$^1$ and R.Ch. Mustafayev$^2$}

\address{$^{1}$ Institute of Mathematics of the Academy of Sciences of the Czech
Republic, \'Zitna~25,  115 67 Prague 1, Czech Republic.}
\email{\textcolor[rgb]{0.00,0.00,0.84}{gogatish@math.cas.cz}}

\address{$^{2}$ Department of Mathematics, Faculty of Science and Arts, Kirikkale
University, 71450 Yahsihan, Kirikkale, Turkey.}
\email{\textcolor[rgb]{0.00,0.00,0.84}{rzamustafayev@gmail.com}}

\thanks{The research of A. Gogatishvili was partly supported by the grants P201-13-14743S
of the Grant Agency of the Czech Republic and RVO: 67985840, by
Shota Rustaveli National Science Foundation grants no. 31/48
(Operators in some function spaces and their applications in Fourier
Analysis) and no. DI/9/5-100/13 (Function spaces, weighted
inequalities for integral operators and problems of summability of
Fourier series). The research of both authors was partly supported
by the joint project between  Academy of Sciences of Czech Republic
and The Scientific and Technological Research Council of Turkey}

\subjclass[2010]{Primary 42B25; Secondary 42B35.}

\keywords{Morrey spaces,  maximal operator.}

\begin{abstract}
In this paper we prove that the Hardy-Littlewood maximal operator is
bounded on Morrey spaces $\mathcal{M}_{1,\lambda}(\rn)$, $0 \le \la
< n$ for radial, decreasing functions on $\rn$.
\end{abstract}

\maketitle

\section{Introduction }

Morrey spaces $\mathcal{M}_{p, \lambda} \equiv \mathcal{M}_{p, \lambda} (\rn)$, were  introduced by C.~Morrey in \cite{M1938} in order to study regularity questions which appear in the Calculus of Variations, and defined as follows:  for $0 \le \lambda \le n$ and $1\le p < \infty$,
$$
\mathcal{M}_{p,\lambda} : = \left\{ f \in   \Lploc:\,\left\| f\right\|_{\mathcal{M}_{p,\lambda }} : =
\sup_{x\in \rn, \; r>0 }
r^{\frac{\lambda-n}{p}} \|f\|_{L_{p}(B(x,r))} <\infty\right\},
$$
where $\Bxr$ is the open ball centered at $x$ of radius $r$.

Note that $\mathcal{M}_{p,0}(\rn) = L_{\infty}(\rn)$ and ${\mathcal M}_{p,n}(\rn) = L_{p}(\rn)$, when $1 \le p < \infty$.

These spaces describe local regularity more precisely than Lebesgue spaces and appeared to be quite useful in the study of the local
behavior of solutions to partial differential equations, a priori estimates and other topics in PDE (cf. \cite{giltrud}).

Given a locally integrable function $f$ on $\rn$ and $0\leq \a <n$,
the fractional maximal function $M_{\a}f$ of $f$ is defined by
$$
M_{\a}f(x):=\sup_{Q \ni x}|Q|^{\frac{\a-n}{n}}\int_Q |f(y)|\,dy,
\qquad (x\in\rn),
$$
where the supremum is taken over all cubes $Q$ containing $x$. The operator $M_{\a}:\,f \rightarrow M_{\a}f$ is called the fractional
maximal operator. $M: = M_{0}$ is the classical Hardy-Littlewood maximal operator.

The study of maximal operators is one of the most important topics in harmonic analysis. These significant non-linear operators, whose behavior are very informative in particular in differentiation theory, provided the understanding and the inspiration for the development of the general class of singular and potential operators
(see, for instance, \cite{stein1970}, \cite{guz1975}, \cite{GR}, \cite{stein1993},
\cite{graf2008}, \cite{graf}).

The boundedness of the Hardy-Littlewood maximal operator $M$ in
Morrey spaces $\mathcal{M}_{p,\lambda}$ was proved by F.~Chiarenza
and M.~Frasca in \cite{ChiFra1987}: It was shown  that $Mf$ is a.e. finite if $f \in \mathcal{M}_{p,\lambda}$ and an estimate
\begin{equation}\label{ChiFr}
\|Mf\|_{\mathcal{M}_{p,\lambda}} \le c \|f\|_{\mathcal{M}_{p,\lambda}}
\end{equation}
holds if $1 < p < \infty$ and $0 < \lambda < n$, and a weak type estimate \eqref{ChiFr} replaces for $p = 1$, that is, the inequality
\begin{equation}\label{ChiFrWeakType}
t |\{Mf > t\} \cap B(x,r)| \le cr^{n-\lambda} \|f\|_{\mathcal{M}_{1,\lambda}}
\end{equation}
holds with constant $c$ independent of $x,\,r,\,t$ and $f$.

In this paper we show that \eqref{ChiFr} is not true for $p = 1$. According to our example the right result is  \eqref{ChiFrWeakType}. If restricted to the cone of radial, decreasing functions on $\rn$, inequality \eqref{ChiFr} holds true for $p = 1$.

The paper is organized as follows. We start with notation and
preliminary results in Section~\ref{s.2}.
In Section \ref{s.3}, we prove that the Hardy-Littlewood maximal operator $M$ is bounded on $\M_{1,\la}$, $0 < \la < n$, for radial, decreasing functions, and we give an example which shows that $M$ is not bounded on $\M_{1,\la}$, $0 < \la < n$.

\

\section{Notations and Preliminaries}\label{s.2}

\

Now we make some conventions. Throughout the paper, we always denote
by $c$ a positive constant, which is independent of main parameters, but it may vary from line to line. By $a\ls b$ we mean that $a\le c b$ with some positive
constant $c$ independent of appropriate quantities. If $a\ls b$ and
$b\ls a$, we write $a\approx b$ and say that $a$ and $b$ are
equivalent. For a measurable set $E$, $\chi_E$ denotes the characteristic function of $E$.

Let $\Omega$ be any measurable subset of $\rn$, $n\geq 1$. Let
$\mf(\Omega)$ denote the set of all measurable functions on $\Omega$
and $\mf_0 (\Omega)$ the class of functions in $\mf (\Omega)$ that
are finite a.e., while $\mf^{\dn} (0,\infty)$ ($\mf^{+,\dn} (0,\infty)$) is used to denote the subset of those functions which are non-increasing (non-increasing and non-negative) on $(0,\infty)$. Denote by $\mf^{\rad,\dn} = \mf^{\rad,\dn}(\rn)$
the set of all measurable, radial, decreasing functions on $\rn$, that is,
$$
\mf^{\rad,\dn} : = \{f \in \mf(\rn):\, f(x) = \vp(|x|),\,x \in \rn
\,\mbox{with}\,\vp \in \mf^{\dn}(0,\infty)\}.
$$

Recall that $Mf \ap Hf$, $f \in \mf^{\rad,\dn}$, where
$$
Hf (x) : = \frac{1}{|B(0,|x|)} \int_{B(0,|x|)} |f(y)|\,dy
$$
is $n$-dimensional Hardy operator. Obviously, $Hf \in  \mf^{\rad,\dn}$, when $f \in \mf^{\rad,\dn}$.

For $p\in (0,\infty]$, we define the functional
$\|\cdot\|_{p,\Omega}$ on $\mf(\Omega)$ by

\begin{equation*}
\|f\|_{p,\Omega}:=
\begin{cases}
    (\int_{\Omega} |f(x)|^p \,dx)^{1/p}  &\text{if } \ \ p<\infty,\\
    \esup_{\Omega} |f(x)|              &\text{if } \ \ p=\infty.
\end{cases}
\end{equation*}

The Lebesgue
space $L_p(\Omega)$ is given by
\begin{equation*}
L_p(\Omega):= \{f\in \mf(\Omega): \|f\|_{p,\Omega}<\infty\}
\end{equation*}
and it is equipped with the quasi-norm $\|\cdot\|_{p,\Omega}$.

The decreasing rearrangement (see, e.g., \cite[p. 39]{BS}) of a
function $f \in \mf_0 (\rn)$  is defined by
$$
f^*(t) : = \inf\left\{\la >0 : |\{x\in\rn: |f(x)|>\la \}| \leq t
\right\}\quad (0<t<\infty).
$$

\section{Boundedness of $M$ on $\M_{1,\la}$ for radial, decreasing functions}\label{s.3}

Reacall that
\begin{align*}
M_{\a}f (x) & \ap \sup_{B \ni x}   |B|^{\frac{\a-n}{n}}\int_B |f(y)|\,dy \\
& \ap \sup_{r > 0}   |B(x,r)|^{\frac{\a-n}{n}}\int_{B(x,r)} |f(y)|\,dy,
~ (x\in\rn),
\end{align*}
where the supremum is taken over all balls $B$ containing $x$.

In order to prove our main result we need the following auxiliary lemmas.
\begin{lemma}\label{lem999.1}
Assume that $0 < \la <  n$. Let  $f \in \mf^{\rad,\dn}(\rn)$ with $f(x) = \vp (|x|)$. The equivalency
$$
\|f\|_{\M_{1,\la}} \ap \sup_{x>0} x^{\la - n} \int_0^x
|\vp (\rho)|\rho^{n-1}\,d\rho
$$
holds with positive constants independent of $f$.
\end{lemma}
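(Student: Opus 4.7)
The plan is to reduce both sides of the equivalence to the single quantity $\sup_{r>0} r^{\la-n} \int_{\Br} |f(y)|\,dy$. By passing to polar coordinates one has
$$
\int_{\Br} |f(y)|\,dy \;=\; \o_{n-1}\int_0^r |\vp(\rho)|\,\rho^{n-1}\,d\rho,
$$
where $\o_{n-1}$ denotes the surface measure of the unit sphere in $\rn$. Hence the right-hand side of the claimed equivalence equals $\o_{n-1}^{-1}\sup_{r>0} r^{\la-n}\|f\|_{L_1(\Br)}$, and everything reduces to comparing $\|f\|_{\M_{1,\la}}$ with the supremum of this quantity over balls centered at the origin.

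The lower bound $\|f\|_{\M_{1,\la}} \gs \sup_{r>0} r^{\la-n}\int_0^r |\vp(\rho)|\rho^{n-1}\,d\rho$ is immediate: restrict the supremum in the definition of the Morrey norm to the subclass of balls $\Br$ with center at the origin.

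For the reverse inequality, the key step is to establish the centered-ball maximality
$$
\int_{\Bxr} |f(y)|\,dy \;\le\; \int_{\Br} |f(y)|\,dy \qquad (x\in\rn,\; r>0).
$$
Informally, among balls of a fixed radius, the one centered at the origin captures the most mass of a radial decreasing function. My plan is to prove this via the layer-cake formula combined with the elementary geometric fact
$$
|\Bxr \cap B(0,s)| \;\le\; \min\{|\Br|,|B(0,s)|\} \;=\; |\Br \cap B(0,s)|,\qquad s>0.
$$
Since $f$ is radial and decreasing, each superlevel set $\{|f|>t\}$ is a centered ball, so the layer-cake decomposition gives
$$
\int_{\Bxr} |f(y)|\,dy \;=\; \int_0^\infty |\Bxr \cap \{|f|>t\}|\,dt \;\le\; \int_0^\infty |\Br \cap \{|f|>t\}|\,dt \;=\; \int_{\Br} |f(y)|\,dy.
$$
(Equivalently, this is the Hardy-Littlewood rearrangement inequality applied to $f$ and $\chi_{\Bxr}$, whose symmetric decreasing rearrangement is $\chi_{\Br}$.)

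Taking the supremum in $x$ and $r$ on the left, and then converting the right-hand side to polar coordinates, yields
$$
\|f\|_{\M_{1,\la}} \;\le\; \sup_{r>0} r^{\la-n}\int_{\Br}|f(y)|\,dy \;=\; \o_{n-1}\sup_{r>0} r^{\la-n}\int_0^r |\vp(\rho)|\rho^{n-1}\,d\rho,
$$
which combined with the lower bound gives the desired equivalence with explicit constants depending only on $n$. The only conceptually nontrivial point is the centered-ball maximality lemma; once that is in hand the argument is a two-line computation, so I do not anticipate a serious obstacle.
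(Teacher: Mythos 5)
Your proof is correct and follows essentially the same route as the paper: the lower bound comes from restricting the Morrey supremum to balls centered at the origin, and the upper bound is the Hardy--Littlewood rearrangement inequality, which for radial decreasing $f$ is exactly your centered-ball maximality $\int_{\Bxr}|f|\le\int_{\Br}|f|$ (the paper phrases it as $\int_B|f|\le\int_0^{|B|}f^*(t)\,dt$ and then identifies $f^*$ with $|\vp|$ after a change of variables). Your layer-cake derivation of that step is a self-contained substitute for the paper's appeal to $f^*$, but the substance is identical.
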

\begin{proof}
Recall that
$$
\|f\|_{\M_{1,\la}} \ap
\sup_{B}|B|^{\frac{\la-n}{n}}\int_{B} f = \| M_{\la}f\|_{\infty}, ~ f \in \mf(\rn).
$$
Switching to polar coordinates, we have that
\begin{align*}
M_{\la}(f)(y) & \gs |B(0,|y|)|^{\frac{\la-n}{n}} \int_{B(0,|y|)} |f(z)|\,dz \\
& \ap |y|^{\la - n}\int_0^{|y|}
|\vp(\rho)|\rho^{n-1}\,d\rho.
\end{align*}
Consequently,
\begin{align*}
\|f\|_{\M_{1,\la}} & \gs \esup_{y \in \rn} |y|^{\la - n}\int_0^{|y|}
|\vp(\rho)|\rho^{n-1}\,d\rho  \\
& = \sup_{x > 0} x^{\la - n}\int_0^x
|\vp(\rho)|\rho^{n-1}\,d\rho,
\end{align*}
where $f (\cdot) = \vp (|\cdot|)$.

On the other hand,
\begin{align*}
\|f\|_{\M_{1,\la}} & \ls \sup_{B}|B|^{\frac{\la-n}{n}}\int_0^{|B|} f^* (t)\,dt \\
& = \sup_{B}|B|^{\frac{\la-n}{n}}\int_0^{|B|}  |\vp(t^{\frac{1}{n}})|\,dt \\
& \ap \sup_{B}|B|^{\frac{\la-n}{n}}\int_0^{|B|^{\frac{1}{n}}}  |\vp(\rho)|\rho^{n-1}\,d\rho \\
& = \sup_{x > 0} x^{\la - n}\int_0^x
|\vp(\rho)|\rho^{n-1}\,d\rho,
\end{align*}
where  $f (\cdot) = \vp (|\cdot|)$.
\end{proof}

\begin{corollary}\label{cor999.1}
Assume that $0 < \la <  n$. Let  $f \in \mf^{\rad,\dn}(\rn)$ with $f(x) = \vp (|x|)$. The equivalency
$$
\|Mf\|_{\M_{1,\la}} \ap \sup_{x>0} x^{\la - n} \int_0^x
\vp (\rho)\rho^{n-1} \ln \left(\frac{x}{\rho}\right)\,d\rho
$$
holds with positive constants independent of $f$.
\end{corollary}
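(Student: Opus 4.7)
The plan is to reduce the corollary to Lemma \ref{lem999.1} applied to $Mf$ itself, using the fact recalled in the preliminaries that $Mf \ap Hf$ whenever $f \in \mf^{\rad,\dn}$, together with the observation that $Hf$ is again a radial, decreasing function.

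First I would rewrite $Hf$ in polar coordinates: for $f(x) = \vp(|x|)$ one has
$$
Hf (x) \ap |x|^{-n} \int_0^{|x|} \vp(\rho)\rho^{n-1}\,d\rho =: \psi(|x|),
$$
so $Mf \in \mf^{\rad,\dn}$ with radial profile $\psi \in \mf^{+,\dn}(0,\infty)$ (averages of a non-increasing nonnegative function are non-increasing). Hence Lemma \ref{lem999.1}, applied to $Mf$ in place of $f$ and with $\psi$ in place of $\vp$, gives
$$
\|Mf\|_{\M_{1,\la}} \ap \sup_{x>0} x^{\la-n}\int_0^x \psi(\rho)\,\rho^{n-1}\,d\rho.
$$

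Second, I would evaluate the inner integral by substituting the definition of $\psi$ and switching the order of integration (Fubini for nonnegative integrands): for each fixed $x>0$,
$$
\int_0^x \psi(\rho)\,\rho^{n-1}\,d\rho = \int_0^x \frac{1}{\rho}\int_0^\rho \vp(s)\,s^{n-1}\,ds\,d\rho = \int_0^x \vp(s)\,s^{n-1}\int_s^x \frac{d\rho}{\rho}\,ds = \int_0^x \vp(s)\,s^{n-1}\ln\!\Bigl(\frac{x}{s}\Bigr)\,ds.
$$
Combining the two displays yields the claimed equivalence.

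There is essentially no obstacle: both steps are routine once one recognizes that $Hf$ is radial-decreasing, so Lemma \ref{lem999.1} applies directly to the output of $M$. The only minor point to check is the legitimacy of the Fubini swap, which is immediate as the integrand $\vp(s)s^{n-1}\rho^{-1}$ is nonnegative and measurable on the bounded region $\{0<s<\rho<x\}$.
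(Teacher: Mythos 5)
Your argument is correct and is essentially identical to the paper's proof: both apply Lemma \ref{lem999.1} to $Mf$ after invoking $Mf \ap Hf$ and $Hf \in \mf^{\rad,\dn}$, pass to polar coordinates, and use Fubini's theorem to produce the logarithmic kernel. The only difference is cosmetic --- you name the radial profile $\psi$ explicitly and justify its monotonicity, which the paper leaves implicit.
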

\begin{proof}
Let $f \in \mf^{\rad,\dn}$  with $f(x) = \vp (|x|)$. Since $Mf \ap Hf$ and $Hf \in \mf^{\rad,\dn}$, by Lemma \ref{lem999.1}, switching to polar coordinates, using Fubini's Theorem,  we have that
\begin{align*}
\|Mf\|_{\M_{1,\la}} & \ap \sup_{x>0} x^{\la - n} \int_0^x
\left(\frac{1}{|B(0,t|} \int_{B(0,t)} |f(y)|\,dy \right)t^{n-1}\,dt \\
& \ap \sup_{x>0} x^{\la - n} \int_0^x
\frac{1}{t} \int_0^t
\vp (\rho)\rho^{n-1}\,d\rho\,dt \\
& = \sup_{x>0} x^{\la - n} \int_0^x
\vp (\rho)\rho^{n-1} \ln \left(\frac{x}{\rho}\right)\,d\rho.
\end{align*}
\end{proof}

\begin{lemma}\label{lem999.2}
    Assume that $0 < \la <  n$. Let  $f \in \mf^{\rad,\dn}$ with $f(x) = \vp (|x|)$. The inequality
$$
    \|Mf\|_{\M_{1,\la}} \ls \|f\|_{\M_{1,\la}},~ f \in \mf^{\rad,\dn}
$$
holds if and only if the inequality
$$
 \sup_{x>0} x^{\la - n} \int_0^x
 \vp (\rho)\rho^{n-1} \ln \left(\frac{x}{\rho}\right)\,d\rho \ls \sup_{x>0} x^{\la - n} \int_0^x
\vp (\rho)\rho^{n-1}\,d\rho,~ \vp \in \mf^{+,\dn}(0,\infty)
$$
holds.
\end{lemma}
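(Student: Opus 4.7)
The plan is to observe that this lemma is essentially a translation statement: Lemma~\ref{lem999.1} and Corollary~\ref{cor999.1} already convert the two Morrey quasi-norms in the first inequality into the one-dimensional expressions appearing in the second inequality, so the equivalence of the two inequalities should follow by direct substitution. The correspondence is $f \in \mf^{\rad,\dn}(\rn) \longleftrightarrow \vp \in \mf^{+,\dn}(0,\infty)$ via $f(x)=\vp(|x|)$ (after passing to $|\vp|$, since both $\|f\|_{\M_{1,\la}}$ and $\|Mf\|_{\M_{1,\la}}$ depend only on $|f|$).

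For the direction $(\Rightarrow)$, I would fix an arbitrary $\vp \in \mf^{+,\dn}(0,\infty)$, set $f(x):=\vp(|x|)$, and note $f \in \mf^{\rad,\dn}(\rn)$. By Corollary~\ref{cor999.1} the left-hand side of the one-dimensional inequality is equivalent to $\|Mf\|_{\M_{1,\la}}$, and by Lemma~\ref{lem999.1} the right-hand side is equivalent to $\|f\|_{\M_{1,\la}}$. Applying the assumed Morrey bound to $f$ yields the claimed one-dimensional inequality, with constants independent of $\vp$.

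For the direction $(\Leftarrow)$, I would fix $f \in \mf^{\rad,\dn}(\rn)$ with $f(x)=\vp(|x|)$ and set $\psi(\rho):=|\vp(\rho)|$, so that $\psi \in \mf^{+,\dn}(0,\infty)$. Since $|f(x)|=\psi(|x|)$, both $\|f\|_{\M_{1,\la}}$ and $\|Mf\|_{\M_{1,\la}}$ are unchanged if $\vp$ is replaced by $\psi$. Applying Lemma~\ref{lem999.1} and Corollary~\ref{cor999.1} to $\psi$ converts the one-dimensional inequality into the Morrey inequality for $f$.

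There is really no hard step here; the whole content of the lemma is already carried out by the two preceding results, which establish the passage between radial functions on $\rn$ and non-increasing functions on $\I$. The only minor point to be careful about is the reduction to the non-negative case, so that the statement of the one-dimensional inequality (which is written for $\vp \in \mf^{+,\dn}(0,\infty)$) applies.
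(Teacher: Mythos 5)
Your proposal is correct and follows exactly the paper's route: the paper's proof is the one-line remark that the lemma ``immediately follows from Lemma~\ref{lem999.1} and Corollary~\ref{cor999.1},'' and your argument simply spells out the two directions of that substitution. The extra care you take with the reduction to non-negative $\vp$ is a reasonable (if minor) point that the paper leaves implicit.
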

\begin{proof}
    The statement immediately follows from Lemma \ref{lem999.1} and Corollary \ref{cor999.1}.
\end{proof}

\begin{lemma}\label{lem999.5}
Let $0 <\la < n$. Then inequality
\begin{equation}\label{eq.999}
 \sup_{x>0} x^{\la - n} \int_0^x
 \vp (\rho)\rho^{n-1} \ln \left(\frac{x}{\rho}\right)\,d\rho \ls \sup_{x>0} x^{\la - n} \int_0^x
\vp (\rho)\rho^{n-1}\,d\rho
\end{equation}
holds for all $\vp \in \mf^{+,\dn}(0,\infty)$.
\end{lemma}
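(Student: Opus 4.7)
The plan is to reduce the logarithmic weight to a pure power by Fubini, then use the pointwise bound on $\Phi(t) := \int_0^t \varphi(\rho)\rho^{n-1}\,d\rho$ that the right-hand side of \eqref{eq.999} already provides. Denote $A := \sup_{x>0} x^{\la-n} \Phi(x)$, so that $\Phi(t) \le A\, t^{n-\la}$ for every $t>0$.

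First I would write $\ln(x/\rho) = \int_{\rho}^x \frac{dt}{t}$ and swap the order of integration. Since $\varphi \ge 0$, Fubini's theorem gives
\begin{equation*}
\int_0^x \varphi(\rho)\rho^{n-1} \ln\!\left(\tfrac{x}{\rho}\right) d\rho
= \int_0^x \frac{1}{t}\int_0^t \varphi(\rho)\rho^{n-1}\,d\rho\,dt
= \int_0^x \frac{\Phi(t)}{t}\,dt.
\end{equation*}
This is essentially the same computation that already appeared in the proof of Corollary \ref{cor999.1}, just read backwards.

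Next I would plug the bound $\Phi(t) \le A\, t^{n-\la}$ into the above and compute:
\begin{equation*}
\int_0^x \frac{\Phi(t)}{t}\,dt \;\le\; A \int_0^x t^{n-\la-1}\,dt \;=\; \frac{A}{n-\la}\, x^{n-\la}.
\end{equation*}
Multiplying through by $x^{\la-n}$ and taking the supremum over $x > 0$ then yields \eqref{eq.999} with explicit constant $1/(n-\la)$.

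There is really no serious obstacle here: the only place the hypotheses are used is the condition $\la < n$, which ensures that $\int_0^x t^{n-\la-1}\,dt$ converges at the origin and equals $x^{n-\la}/(n-\la)$. It is worth noting that the monotonicity of $\varphi$ plays no role in this argument -- the inequality holds for arbitrary $\varphi \in \mf^+(0,\infty)$ -- although of course monotonicity was essential in reducing the original Morrey-norm problem to \eqref{eq.999} via Lemma \ref{lem999.2}.
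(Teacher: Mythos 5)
Your proof is correct and is essentially identical to the paper's: both apply Fubini to convert the logarithmic kernel into the iterated Hardy average $\int_0^x \Phi(t)\,t^{-1}\,dt$, then bound $\Phi(t)$ by $A\,t^{n-\la}$ (the paper does this by inserting $t^{n-\la-1}t^{\la-n}$ and pulling out the supremum) and integrate $t^{n-\la-1}$ using $\la<n$. Your remark that monotonicity of $\vp$ is not needed here is a correct, if minor, additional observation.
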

\begin{proof}
Indeed:
\begin{align*}
\sup_{x>0} x^{\la - n} \int_0^x
\vp (\rho)\rho^{n-1} \ln \left(\frac{x}{\rho}\right)\,d\rho & \\
& \hspace{-3cm} = \sup_{x>0} x^{\la - n} \int_0^x
\frac{1}{t} \int_0^t
\vp (\rho)\rho^{n-1}\,d\rho\,dt & \\
& \hspace{-3cm} = \sup_{x>0} x^{\la - n} \int_0^x
t^{n - \la - 1} t^{\la - n}\int_0^t
\vp (\rho)\rho^{n-1}\,d\rho\,dt \\
& \hspace{-3cm} \le  \sup_{t > 0}t^{\la - n}\int_0^t \vp (\rho)\rho^{n-1}\,d\rho \cdot \left(  \sup_{x>0} x^{\la - n} \int_0^x
t^{n - \la - 1} \,dt \right) \\
& \hspace{-3cm} \ap \sup_{t > 0}t^{\la - n}\int_0^t
\vp (\rho)\rho^{n-1}\,d\rho.
\end{align*}
\end{proof}

Now we are in position to prove our main result.
\begin{theorem}\label{main.thm}
Assume that $0 < \la <  n$. The inequality
\begin{equation}\label{main}
\|Mf\|_{\M_{1,\la}} \ls \|f\|_{\M_{1,\la}}
\end{equation}
holds for all $f \in \mf^{\rad,\dn}$ with constant independent of $f$.
\end{theorem}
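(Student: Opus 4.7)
The strategy is entirely reductive: the theorem follows by composing the three preparatory results already in place. First I would invoke Lemma \ref{lem999.2}, which says that the Morrey-norm inequality \eqref{main} for radial decreasing $f$ with $f(x)=\vp(|x|)$ is equivalent to the one-dimensional weighted inequality
\[
\sup_{x>0} x^{\la - n} \int_0^x \vp (\rho)\rho^{n-1} \ln \Bigl(\tfrac{x}{\rho}\Bigr)\,d\rho \ls \sup_{x>0} x^{\la - n} \int_0^x \vp (\rho)\rho^{n-1}\,d\rho
\]
for all $\vp \in \mf^{+,\dn}(0,\infty)$. This equivalence is simply the combination of Lemma \ref{lem999.1} (which rewrites $\|f\|_{\M_{1,\la}}$ as the supremum on the right) and Corollary \ref{cor999.1} (which identifies $\|Mf\|_{\M_{1,\la}}$ with the supremum on the left).

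Then I would apply Lemma \ref{lem999.5}, which is precisely the statement that the displayed one-dimensional inequality holds in the range $0<\la<n$. The mechanism there is to use $\ln(x/\rho)=\int_\rho^x dt/t$ and Fubini to rewrite the logarithmic average, then pull out the factor $t^{\la-n}\int_0^t \vp(\rho)\rho^{n-1}\,d\rho$ (which is bounded by its supremum, giving the right-hand norm), leaving behind the elementary integral $x^{\la-n}\int_0^x t^{n-\la-1}\,dt$. Since $0<\la<n$, this last integral converges and equals $(n-\la)^{-1}$, producing the desired bound.

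Chaining these two steps closes the argument. There is no real obstacle: the nontrivial analytical content has already been isolated into Lemma \ref{lem999.5}, where the hypothesis $\la<n$ is used exactly to ensure integrability of $t^{n-\la-1}$ at the origin. The present theorem is therefore a one-line consequence, and I would write it as: by Lemma \ref{lem999.2} it suffices to verify inequality \eqref{eq.999}, which is Lemma \ref{lem999.5}.
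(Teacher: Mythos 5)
Your proposal is correct and follows exactly the paper's own route: the paper proves Theorem \ref{main.thm} in one line by combining Lemma \ref{lem999.2} (the reduction to the one-dimensional inequality, itself a consequence of Lemma \ref{lem999.1} and Corollary \ref{cor999.1}) with Lemma \ref{lem999.5}. Your account of the mechanism inside Lemma \ref{lem999.5}, including the role of the hypothesis $\la<n$ in making $x^{\la-n}\int_0^x t^{n-\la-1}\,dt$ equal to the finite constant $(n-\la)^{-1}$, matches the paper's argument.
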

\begin{proof}
The statement follows by Lemmas \ref{lem999.2} and \ref{lem999.5}.
\end{proof}

\begin{remark}
Note that inequality \eqref{main} holds true when $\la = 0$, for  $\mathcal{M}_{1,0}(\rn) = L_{\infty}(\rn)$ and $M$ is bounded on $L_{\infty}(\rn)$.
\end{remark}

\begin{remark}
It is obvious that the statement of Theorem \ref{main.thm} does not hold when $\la = n$, for in this case ${\mathcal M}_{1,n}(\rn) = L_{1}(\rn)$ and the inequality
$$
\|Mf\|_{L_1(\rn)} \ls \|f\|_{L_1(\rn)}
$$
is true only for $f = 0$ a.e., which follows from the fact that $Mf (x) \ap |x|^{-n}$ for $|x|$ large when $f \in \Lloc$.
\end{remark}

\begin{example}
We show that $M$ is not bounded on ${\mathcal M}_{1,\la}(\rn)$, $0 <
\la < n$. For simplicity let $n=1$ and $\la = 1 / 2$. Consider the
function
$$
f(x) = \sum_{k=0}^{\infty} \chi_{[k^2,k^2 + 1]}(x).
$$
Then
\begin{align*}
\|f\|_{{\mathcal M}_{1,1/2}(\R)} = \sup_{I}|I|^{-1/2} \int_I f \le \sup_{I:\,|I| \le 1}|I|^{-1/2} \int_I f + \sup_{I:\,|I| > 1}|I|^{-1/2} \int_I f,
\end{align*}
where the supremum is taken over all open intervals $I \subset \R$.
It is easy to see that
$$
\sup_{I:\,|I| \le 1}|I|^{-1/2} \int_I f \le \sup_{I:\,|I| \le 1}|I|^{1/2} \le 1.
$$
Note that
\begin{align*}
\sup_{I:\,|I| > 1}|I|^{-1/2} \int_I f & = \sup_{m \in \N} \sup_{I:\,m < |I| \le m+1}|I|^{-1/2} \int_I f \\
& = \sup_{m \in \N} \sup_{I:\,m < |I| \le m+1}|I|^{-1/2} \int_I \left( \sum_{k=0}^{\infty} \chi_{[k^2,k^2 + 1]}(x)\right)\,dx \\
& = \sup_{m \in \N} \sup_{I:\,m < |I| \le m+1}|I|^{-1/2} \left| I \cap \bigcup_{k=0}^{\infty} [k^2,k^2 + 1]\right|.
\end{align*}
Since
$$
\left| I \cap \bigcup_{k=0}^{\infty} [k^2,k^2 + 1]\right| \le \left| [0,m+1] \cap \bigcup_{k=0}^{\infty} [k^2,k^2 + 1]\right|
$$
for any interval $I$ such that $m < |I| \le m + 1$, we obtain that
\begin{align*}
\sup_{I:\,|I| > 1}|I|^{-1/2} \int_I f
& \ls \sup_{m \in \N} m^{-1/2} \left| [0,m+1] \cap \bigcup_{k=0}^{\infty} [k^2,k^2 + 1]\right| \\
& \ls \sup_{m \in \N} m^{-1/2} m^{1/2} = 1.
\end{align*}
Consequently, we arrive at
$$
\|f\|_{{\mathcal M}_{1,1/2}(\R)} \ls 2.
$$
On the other hand, since
\begin{align*}
M f \ge \sum_{k=0}^{\infty} \left( \chi_{[k^2,k^2 + 1]} + \frac{1}{x - k^2} \chi_{[k^2 + 1,k^2 + k + 1]} + \frac{1}{(k+1)^2 + 1 - x} \chi_{[k^2 + k + 1, (k+1)^2]} \right),
\end{align*}
we have that
\begin{align*}
\|Mf\|_{{\mathcal M}_{1,1/2}(\R)} \ge \sup_{k \in \N} k^{-1} \int_0^{k^2} Mf & \ge \sup_{k \in \N} k^{-1} \sum_{i=1}^{k-1}\int_{i^2}^{(i+1)^2}  Mf \\
& \ge \sup_{k \in \N} k^{-1}\sum_{j=1}^{k-1} \ln j \gs \sup_{k \in \N} \ln k = \infty.
\end{align*}
\end{example}

\begin{bibdiv}
    \begin{biblist}

\bib{BS}{book}{
    author={Bennett, C.},
    author={Sharpley, R.},
    title={Interpolation of operators},
    series={Pure and Applied Mathematics},
    volume={129},
    publisher={Academic Press Inc.},
    place={Boston, MA},
    date={1988},
    pages={xiv+469},
    isbn={0-12-088730-4},
    review={\MR{928802 (89e:46001)}},
}

\bib{ChiFra1987}{article}{
    author={Chiarenza, F.},
    author={Frasca, M.},
    title={Morrey spaces and Hardy-Littlewood maximal function},
    journal={Rend. Mat. Appl. (7)},
    volume={7},
    date={1987},
    number={3-4},
    pages={273--279 (1988)},
    issn={1120-7183},
    review={\MR{985999 (90f:42017)}},
}

\bib{GR}{book}{
    author={Garcia-Cuerva, J.},
    author={Rubio de Francia, J.L.},
    title={Weighted norm inequalities and related topics},
    series={North-Holland Mathematics Studies},
    volume={116},
    note={Notas de Matem\'atica [Mathematical Notes], 104},
    publisher={North-Holland Publishing Co.},
    place={Amsterdam},
    date={1985},
    pages={x+604},
}

\bib{giltrud}{book}{
    author={Gilbarg, D.},
    author={Trudinger, N. S.},
    title={Elliptic partial differential equations of second order},
    edition={2},
    publisher={Springer-Verlag, Berlin},
    date={1983},
    pages={xiii+513},
    isbn={3-540-13025-X},
    review={\MR{737190 (86c:35035)}},
}

\bib{graf2008}{book}{
    author={Grafakos, L.},
    title={Classical Fourier analysis},
    series={Graduate Texts in Mathematics},
    volume={249},
    edition={2},
    publisher={Springer, New York},
    date={2008},
    pages={xvi+489},
    isbn={978-0-387-09431-1},
    review={\MR{2445437 (2011c:42001)}},
}

\bib{graf}{book}{
    author={Grafakos, L.},
    title={Modern Fourier analysis},
    series={Graduate Texts in Mathematics},
    volume={250},
    edition={2},
    publisher={Springer},
    place={New York},
    date={2009},
    pages={xvi+504},
    isbn={978-0-387-09433-5},
    review={\MR{2463316 (2011d:42001)}},
}

\bib{guz1975}{book}{
    author={de Guzm{\'a}n, M.},
    title={Differentiation of integrals in $R^{n}$},
    series={Lecture Notes in Mathematics, Vol. 481},
    note={With appendices by Antonio C\'ordoba, and Robert Fefferman, and two
        by Roberto Moriy\'on},
    publisher={Springer-Verlag, Berlin-New York},
    date={1975},
    pages={xii+266},
    review={\MR{0457661 (56 \#15866)}},
}

\bib{M1938}{article}{
    author={Morrey, C. B.},
    title={On the solutions of quasi-linear elliptic partial differential
        equations},
    journal={Trans. Amer. Math. Soc.},
    volume={43},
    date={1938},
    number={1},
    pages={126--166},
    issn={0002-9947},
    review={\MR{1501936}},
    doi={10.2307/1989904},
}

\bib{stein1970}{book}{
    author={Stein, E.M.},
    title={Singular integrals and differentiability properties of functions},
    series={Princeton Mathematical Series, No. 30},
    publisher={Princeton University Press, Princeton, N.J.},
    date={1970},
    pages={xiv+290},
    review={\MR{0290095 (44 \#7280)}},
}

\bib{stein1993}{book}{
    author={Stein, E.M.},
    title={Harmonic analysis: real-variable methods, orthogonality, and
        oscillatory integrals},
    series={Princeton Mathematical Series},
    volume={43},
    note={With the assistance of Timothy S. Murphy;
        Monographs in Harmonic Analysis, III},
    publisher={Princeton University Press, Princeton, NJ},
    date={1993},
    pages={xiv+695},
    isbn={0-691-03216-5},
    review={\MR{1232192 (95c:42002)}},
}

\bib{tor1986}{book}{
    author={Torchinsky, A.},
    title={Real-variable methods in harmonic analysis},
    series={Pure and Applied Mathematics},
    volume={123},
    publisher={Academic Press, Inc., Orlando, FL},
    date={1986},
    pages={xii+462},
    isbn={0-12-695460-7},
    isbn={0-12-695461-5},
    review={\MR{869816 (88e:42001)}},
}

\end{biblist}
\end{bibdiv}

\end{document}